\documentclass[a4paper, 12pt]{article}

\usepackage[utf8]{inputenc}
\usepackage[english]{babel}
\usepackage[top=2cm,bottom=2cm,left=2cm,right=2cm,marginparwidth=1.75cm]{geometry}
\usepackage{amsmath}
\usepackage{amsthm}
\usepackage{amsfonts}
\usepackage{amssymb}
\usepackage{upgreek}
\usepackage{mathtools}

\usepackage{graphicx}
\usepackage{indentfirst}
\usepackage{bbm}
\usepackage{csquotes}
\usepackage[colorlinks=true, allcolors=blue]{hyperref}
\usepackage{calrsfs}
\DeclareMathAlphabet{\pazocal}{OMS}{zplm}{m}{n}

\newcommand{\defeq}{\mathrel{\mathop:}=}
\newcommand{\eqdef}{\mathrel{\mathop=}:}

\newcommand{\mapto}{ \xrightarrow[]{}}

\newcommand{\eqcoord}{\dot{=}}
\newcommand{\hrz}{\mathsf{h}} 

\newcommand{\dR}{\mathbb{R}}
\newcommand{\Prob}{\mathbb{P}}
\newcommand{\dE}{\mathbb{E}}
\newcommand{\FiltProbSpBrwn}{((\Omega, \mathcal{F}, \Prob), W_\cdot)}

\newcommand{\indic}[1]{\mathbbm{1}_{\{ #1\}}}


\newtheorem{theorem}{Theorem}[section]
\newtheorem{proposition}[theorem]{Proposition}

\theoremstyle{remark}
\newtheorem{remark}[theorem]{Remark}

\theoremstyle{definition}

\theoremstyle{definition}
\newtheorem{definition}[theorem]{Definition}

\theoremstyle{definition}

\newtheorem{lemma}[theorem]{Lemma}

\title{A practical global existence and uniqueness result for stochastic differential equations on Riemannian manifolds of bounded geometry}
\author{Matthias Rakotomalala\thanks{CMAP, CNRS, École polytechnique, Institut Polytechnique de Paris, 91120 Palaiseau, France}}
\date{}

\begin{document}
\maketitle

\begin{abstract}
In this paper, we establish a result for existence and uniqueness of stochastic differential equations on Riemannian manifolds, for regular inhomogeneous tensor coefficients with stochastic drift, under geometrical hypothesis on the manifold, so-called manifolds of bounded geometry. Furthermore, we provide stochastic flow estimates for the solutions.
\end{abstract}

\tableofcontents

\section{Introduction}
\label{sec:Intro}

In this paper, we establish an existence and uniqueness result for stochastic differential equations on Riemannian manifolds with regular inhomogeneous tensor coefficients and a stochastic drift. We work under the assumption that the manifold has bounded geometry. Our main motivation is to obtain a global existence result suitable for applications and modeling purposes. In particular, we are interested in stochastic control problems, where global existence in time and the stochastic nature of the drift enable to consider open-loop control problems. To the best of our knowledge, this specific result does not appear in the literature, and we believe it could support the analysis of mathematical models that rely on stochastic differential equations on manifolds.

From the pioneering work of K. Itô~\cite{ito1950}, there has been significant development in the literature on stochastic differential equations on manifolds. Elworthy~\cite[Theorem 4, Chapter VIII]{elworthy1982stochastic} established a global existence and uniqueness result for a general deterministic homogeneous operator, assuming the manifold is compact. In the book by Ikeda and Watanabe~\cite[Theorem V.4.2]{ikeda2014stochastic}, an existence and uniqueness result up to a blow-up time is provided for time-homogeneous deterministic coefficients. The comparison theorem, in this context, allows for non-blow-up conditions for the minimal Brownian motion on a general manifold, given lower bounds on the Ricci curvature tensor, as shown in~\cite[Corollary VI.5.2, p. 466]{ikeda2014stochastic} and~\cite{hsu2002stochastic}. Wang~\cite[Corollary 2.1.2]{wang2014analysis} proved that, under an additional growth condition on the drift, the drifted Brownian motion does not blow up.

The references above are concerned with criteria on the geometry for the drifted or minimal Brownian motion to be non-explosive, namely, the stochastic process generated by $\frac{1}{2}\Delta + B \cdot \nabla$,
where $B$ is a vector field and $\Delta$ is the Laplace-Beltrami operator.
In contrast, we establish sufficient geometric conditions under which the stochastic process generated by  
$\frac{1}{2} \Sigma \cdot \nabla^2 + \tilde{B} \cdot \nabla$ is non-explosive, where $\Sigma = A \cdot A^*$ and $\tilde{B} = B + \frac{1}{2} \nabla A \cdot A$, with $B$ being a bounded regular vector field and $A$ a bounded regular $(1,1)$-tensor field. Our main result accommodates time-inhomogeneous coefficients $A$ and $B$, as well as stochasticity in $B$. As a particular case, when $A = \text{Id}_{TM}$ and $B$ is deterministic, we recover the same process as in the references above.

In order to ensure the non-explosion of the process, we assume that the manifold has bounded geometry—namely, completeness, a positive injectivity radius, and a bounded Riemann curvature tensor. We apply Itô’s switching strategy~\cite{ito1950} to derive local uniform estimates for the process in charts and establish the non-blow-up property while constructing the solution.

In the analysis of partial differential equations, manifolds of bounded geometry are widely used as they provide a suitable framework for studying non-compact manifolds. This framework allows for the construction of a regular partition of unity and control over the asymptotic behavior of the geometry at infinity. Examples of manifolds of bounded geometry include compact manifolds, Euclidean space, hyperbolic space in dimension $d$, and any Cartesian product of manifolds of bounded geometry. Manifolds of bounded geometry also possess smooth approximate distance functions. In Section~\ref{sec:MainResults}, this property is used to control the growth at infinity of the process.

In Section~\ref{sec:Intro}, we recall results from Riemannian geometry and specify the notion of solution considered here. This notion coincides with the concept of a lifted solution, as used in~\cite{ikeda2014stochastic} and~\cite{elworthy1982stochastic}. In Section~\ref{sec:MainResults}, we construct the solution using a regular partition of unity and directly ensure the non-blow-up property. We then provide integrability estimates for the stochastic flow.

\textbf{Notations} :
In this paper, we use the following notations: $|\cdot|$ denotes the absolute value, and $\| \cdot \|$ denotes the Euclidean norm. If $\phi$ is a local chart, we set $\phi^{-1} = \psi$, so that $\psi^\alpha$ is the inverse of $\phi^\alpha$, where $\alpha$ is the index of the chart. We use capital letters for tensors, lowercase letters for their associated coefficients in local charts, and adopt the Einstein summation convention. The symbol $\eqcoord$ indicates that the right-hand side of the equation is written in coordinates. The manifold considered here is assumed to be Riemannian; we write $g_{ij}$ (respectively $g^{ij}$) for the components of the metric tensor (resp. its inverse) in a local coordinate system. We assume that the manifold is equipped with its Levi-Civita connection, and we write $\nabla_{\partial_i} \partial_j \eqcoord \Gamma^k_{ij}$, the Christoffel symbol, which, in the case of the Levi-Civita connection, is given by $\Gamma^k_{ij} = \frac{1}{2} g^{kl} (\partial_i g_{jl} + \partial_j g_{il} - \partial_l g_{ij})$. Let $T^n_m M$ be the $(n,m)$-tensor bundle over $M$, consisting of tensors that are contravariant of order $n$ and covariant of order $m$. Then, $C^k(T^n_m M)$ denotes the space of $k$-times differentiable sections of $T^n_m M$, and $\nabla^k A$ is the $k$-th order covariant derivative of a tensor $A$. We denote by $\| \cdot \|_g$ the norm induced by $g$ on $C(T^n_m M)$, with $\| A \|_g = \sup_{x \in M} |A|_g$. The notation $A \cdot B$ indicates the scalar product of the vector fields $A$ and $B$. The notation $\circ$ denotes the Stratonovich integral.
\\

\subsection{Riemannian Geometry}
In this section, we recall some results from Riemannian geometry.

\begin{definition}[Smooth Manifold]
A d-dimensional \textit{manifold} $M$ is a second-countable Hausdorff space, such that there exists an atlas $\{(O^\alpha, \phi^\alpha)\}_{\alpha\in \mathfrak{K}}$, where the $O^\alpha$ forms an open cover of $M$ and $\phi^\alpha$ is an \textit{homeomorphism} from $M$ to an open subset of $\dR^d$, and $\phi^\alpha \circ \psi^\beta$ is smooth whenever it makes sense. A couple $(O, \phi)$ is called a local chart.
A \textit{Riemannian metric} $g$ on $M$ assigns to each point $x\in M$ a positive-definite inner product on the tangent space at $x$, 
\begin{equation*}
    g_{x}:T_{x}M\times T_{x}M \to \dR,
\end{equation*}
in a smooth way. A \textit{Riemannian manifold} $(M,g)$ is a smooth manifold $M$ equipped with a Riemannian metric $g$.
\end{definition}

In the following, $(M,g)$ is a Riemannian manifold equipped with its Levi-Civita connection. We use Christoffel symbols $\Gamma^l_{jk}$ to denote the connection coefficients in local charts.

\begin{definition}[Riemann Curvature Tensor]
    Let $X,Y,Z\in C^2(TM)$ be three vector fields on $M$. We define the \textit{Riemann curvature tensor} with the following formula,
    \begin{equation*}
        R(X,Y)Z = \nabla_X\nabla_Y Z - \nabla_Y\nabla_X Z - \nabla_{[X,Y]}Z,
    \end{equation*}
    where $[X,Y]$ is the Lie bracket of the vector fields.
    In local coordinates, this can be expressed as
    \begin{equation*}
        R^l_{ijk} = \partial_j\Gamma^l_{ik} - \partial_k \Gamma^l_{ij} + \Gamma^l_{jm} \Gamma^m_{ik} - \Gamma^l_{km} \Gamma^m_{ij}.
    \end{equation*}
\end{definition}




\begin{definition}
    Let $i : M \to \dR_+$ be the \textit{injectivity radius} function, defined as $i(x)$ is the largest radius for which the exponential map $\exp_x : T_xM \to M$ is a diffeomorphism. The injectivity radius of $M$, noted $i(M)$, is defined as,
    \begin{equation*}
         i(M) = \inf_{m \in M} i(x).
    \end{equation*}
    A manifold is said to have a \textit{positive injectivity radius} if $i(M) > 0$.
\end{definition}

We now recall the definition of Manifold of bounded Geometry.

\begin{definition}[Manifold of bounded Geometry]
    A Riemannian manifold $(M,g)$ equipped with its Levi-Civita connection is said to be \textit{of bounded geometry} if: it is a \textit{complete} metric space, it has \textit{positive injectivity radius}, and the Riemann curvature tensor is bounded,
    \label{def:BoundedGeo}  
    \begin{equation*}
            \| R \|_g \leq C.
    \end{equation*}
\end{definition}
Examples of manifolds with bounded geometry include Euclidean space, the $d$-dimensional hyperbolic space, the $d$-dimensional sphere, any compact manifold, and any Cartesian product of manifolds of bounded geometry. In the absence of compactness, the bounded geometry condition ensures control over the manifold's global structure and analytical properties.

\begin{proposition}
    Suppose that $(M,g)$ is of bounded geometry, then there exists a constant $C_M>0$, such that for any $x\in M$ and for any normal charts around
    $x$ the following estimates holds in coordinates,
    \begin{equation*}
        \max_{i,j}|g_{ij}(x)|,\max_{i,j}|g^{ij}(x)|, \max_{i,j,k} |\Gamma^k_{ij}(x)| \leq C_M, \forall x\in B_{\dR^d}(0,i_M).
    \end{equation*}
\end{proposition}
This result is presented in~\cite{eichhorn1991boundedness}.

\subsection{Orthonormal frame bundle and horizontal lift}
Building on the previous notions of Riemannian geometry, we now introduce additional concepts required to define \textit{rolling without slipping} solutions. For a comprehensive presentation of the following concepts, we refer to \cite[chp 2, p.35]{hsu2002stochastic} and \cite[sec 7.2, p.127]{bishop2011geometry}.
\begin{definition}
    Let $OM$ denote the set of all orthonormal basis of the tangent space at each point of $M$,
    \begin{equation*}
        OM = \{ (x, E_1, \cdots, E_d) | x\in M, (E_1, \cdots, E_d) \text{ is an orthonormal basis of } T_xM  \},
    \end{equation*}
    and denote by $\pi : OM \mapto M$, the canonical projection. Then, there exists a manifold structure on $OM$, that makes $(OM,\pi)$ a principal bundle over $M$,
    called the \textit{orthonormal frame bundle}. An element $u \in OM$, is identifiable as an isometry $u : \dR^d \mapto T_{\pi(u)}M$, for an element $\lambda \in \dR^d$, we will note $u\lambda = \sum_{i = 1}^d E_i \lambda^i \in T_{\pi(u)}M$.
\end{definition}

It may seem natural to find a straightforward way to map the Gaussian noise onto the tangent space by selecting a global section of \( OM \). However, the existence of such a section would require that \( OM \) is globally trivializable. As a counterexample, the 2-sphere \( \mathbb{S}^2 \) does not satisfy this property. Therefore, we must specify how the frame is transported along the path of the process. For this purpose, we introduce the \textit{horizontal lift} of a vector field.

\begin{definition}[Horizontal Lift]
     A smooth curve $(u_t)$ taking values in $OM$, is said to be \textit{horizontal} if for each $e \in \dR^d$ the vector field $(u_te)$ is parallel along the $M$-valued curve $(\pi(u_t))$. A tangent vector $Y \in T_uOM$ is said to be horizontal if it is the tangent vector of a \textit{horizontal} curve at $u$. The space of horizontal vectors at $u$ is denoted by $H_uOM$, we have the decomposition
     \begin{equation*}
         T_uOM = V_uOM \oplus H_uOM,
     \end{equation*}
    where $V_uOM$ is the subspace of \textit{vertical vectors}, that are tangent to the fiber $T_uOM$. It follows that the canonical projection $\pi$, induces an isomorphism $\pi_\hrz : H_uOM\mapto T_{\pi(u)}M$, and for each $B \in T_xM$ and a frame $u$ at $x$, there is a unique horizontal vector $B^\hrz$, the \textit{horizontal lift} of $B$ to $u$, such that $\pi_\hrz(B^\hrz) = B$. Thus if $B$ is a vector field on $M$, then $B^\hrz$ is a vector field on $OM$.\\
    In coordinates $\{x^i,\zeta^j_k\}$, the lifted vector field can be expressed as,
    \begin{equation*}
         B^\hrz \eqcoord b^i(x)\frac{\partial}{\partial x^i} - \Gamma^k_{ij}(x)b^i(x)\zeta^j_m\frac{\partial}{\partial \zeta^k_m}.
    \end{equation*}
\end{definition}

\subsection{Moving frame strong solution}
In this section, we introduce the notion of solution considered in this paper. It coincides with the concept of \textit{rolling without slipping} or \textit{moving frame} solutions introduced by Eells and Elworthy in \cite{eells1976stochastic}. For a detailed presentation of this notion, we refer to \cite{elworthy1982stochastic,matthes1986ikeda,hsu2002stochastic}. The idea is to lift the equation to the \textit{orthonormal frame bundle}, ensuring chart-independent solutions by using an initial basis of the tangent space to map the noise onto the manifold.
The differential structure of a manifold is constructed from local trivializations. Therefore, a continuously adapted process on a manifold is a solution to the desired stochastic differential equation if it is a solution in local charts. To this end, we must introduce a \textit{local regularization} of the dynamical system, namely an atlas, where the integrals in local coordinates are well-defined.

\begin{definition}[Regular Localization]
    Given $((\Omega, \mathcal{F}, \Prob), W_\cdot)$, a filtered probability space with a $\Prob$-complete filtration, equipped with an adapted $d$-dimensional Brownian motion. Let $A : [0,T] \mapto C^2_b(T^1_1M), B : [0,T]\times \Omega \mapto C_b(T^1M)$, such that $B$ is progressively measurable. 
    We say that an atlas $\{ (O^\alpha, \phi^\alpha) \eqdef \Lambda^\alpha, \alpha \in \mathfrak{K}\}$, is a \textit{local regularization} of the stochastic differential system associated to $(A,B)$, if in any chart $\alpha \in \mathfrak{K}, \exists\ C_{\alpha} > 0 $, such that 
    \begin{align*}
        \sup_{t \in [0,T], x\in \phi^\alpha(O^\alpha)} \max_{i =1}^d |b^i (t, \omega, x)| < C_{\alpha} , \Prob-a.s,\\
        \sup_{t \in [0,T], x\in \phi^\alpha(O^\alpha)} \max_{i,j,k=1}^d |\partial^n_k a^i_j (t, x)| < C_{\alpha}, \text{ for } n = 0,1, \\
        \sup_{x\in \phi^\alpha(O^\alpha)}\max_{k,i,j = 1}^{d}|\Gamma^k_{ij}(x))| < C_{\alpha}, \sup_{x\in \phi^\alpha(O^\alpha)}\max_{i,j = 1}^{d} |g_{ij}(x)| < C_{\alpha},
    \end{align*}
    where $a^l_k(t, x)$, $b^i(t,\omega, x)$, $g_{ij}(x)$, $\Gamma^k_{ij}(x)$ are respectively the coefficients of the tensors $A$, $B$, the metric tensor $g$ and the Christoffel symbol in the coordinate chart $\alpha$.
\end{definition}

\begin{definition}
    \label{def:hStrongSol}
    Let $((\Omega, \mathcal{F}, \Prob), W_\cdot)$ be a filtered probability space with a $\Prob$-complete filtration, equipped with an adapted $d$-dimensional Brownian motion. Let $\{e_i\}_{i=1}^d$ be the coordinate unit vectors of $\dR^d$. 
    Let $A : [0,T] \mapto C^2_b(T^1_1M)$, and $B : [0,T]\times \Omega \mapto C_b(T^1M)$ a progressively measurable tensor valued process. For $u \in OM$ define, $A^\hrz_i(t, u) \defeq (A(t,\pi(u))ue_i)^\hrz \defeq (A_i(t, u))^\hrz, B^\hrz(t, \omega, u) =(B(t,\omega, \pi(u)))^\hrz$.
    A progressively measurable adapted and continuous process $(U_\cdot)_{[0,T]}$ taking values in $OM$, is said to be a \textit{moving frame strong solution} of
    \begin{equation*}
        \label{Eq:TOM}
        \tag{$E_{M}^\hrz$}
        dU_t = A^\hrz_i(t,U_t)\circ dW^i_t + B^\hrz(t,U_t)dt \text{ on } T_{U_t}OM, 
    \end{equation*}
    with $\mathcal{F}_0$-measurable initial value $U_0 \in OM$, if there exists, a \textit{regular localization}, that \textit{affirms} the equation. That is, if for any $ t \in [0,T], \alpha \in \mathfrak{K}$, defining, $\Omega^{\alpha}_t \defeq \{ \omega \in \Omega, \pi(U_t(\omega)) \in O^\alpha \}$, $\tau^{\alpha}_t : \Omega^{\alpha}_t \mapto [t,T]$ the exit time from $O^\alpha$, and $(\xi^i(t), \zeta^j_k(t)) = (\phi^\alpha(\pi(U_t)), d\phi^\alpha(\pi(U_t))U_t \partial/\partial x^k)$ the local trivialization of the process in the chart, with $\partial/\partial x^k$ the associated basis. We have that, $(\xi^i(t), \zeta^j_k(t))$ is $\Prob$-as on $\Omega^{\Lambda^\alpha}_t$ up to $\tau^\alpha_t$ solution of,
    \begin{align*}
        \label{Eq:InChart}
        \tag{$E_{\Lambda}$}
        \begin{cases}
            d\xi^i(t)  = (a^i_l(t,\xi(t))\zeta^l_m(t))\circ dW^m_t + b^i(t,\xi(t))dt,\\
            d\zeta^k_m(t) = (-\Gamma^k_{ij}(\xi(t))\zeta^i_m(t) a^j_l(t,\xi(t))\zeta^l_n(t))\circ dW^n_t + (-\Gamma^k_{ij}(\xi(t))b^i(t,\xi(t)) \zeta^j_m(t))dt,
        \end{cases}
    \end{align*}
   where $a^i_l,b^i$ and $\Gamma^k_{ij}$ are respectively the coefficients associated to the tensors $A,B$ and the Christoffel symbol, in the chart $\Lambda^\alpha$.
\end{definition}

All the hypotheses on the regular localization ensure that equation \eqref{Eq:InChart} is well-defined. Indeed all the coefficients involved are supposed to be bounded in the chart. And the $C^2$ regularity of the tensor $A$ allows to define the Stratonovich integral. Finally, since $U_t$ is an orthonormal basis of $T_{\pi(U_t)}M$, we have in coordinate that $\zeta^i_k g_{ij}(\xi) \zeta^j_k = 1$, which implies, since the coefficients of the metric tensor are bounded, that $|\zeta^i_k| < K$ for some $K$ possibly depending on the chart. 

\begin{remark}
    Since any element of $OM$ can be identified as a point $x$ in $M$ and an associated orthonormal basis of $T_xM$, equation \eqref{Eq:TOM} describes a moving frame along a process $X_t = \pi(U_t)$ taking values in $M$, itself solution of :
    \begin{equation*}
        dX_t = A_i(t,U_t)\circ dW^i_t + B(t,X_t)dt \text{ on } T_{X_t}M.
    \end{equation*}
    The equation on $X$ is not autonomous, as it requires accounting for the evolution of the basis of the tangent plane while \textit{rolling} the manifold over the noise.
\end{remark}

\begin{remark}
    The Stratonovich integral formulation is essential for obtaining a chart-independent definition of a solution. Without the chain rule provided by the Stratonovich integral, a second-order term would arise when changing coordinates, corresponding to the corrective term in the Itô formulation. This term would not align with the coordinate transformation of a tensor field, and as a result, the process would no longer be a solution to the same equation under a change of coordinates.
\end{remark}

\section{Main results}
\label{sec:MainResults}
\subsection{Existence and uniqueness result}

We now present the main result of this paper, with its proof provided at the end of this section.
\begin{theorem}[Existence and Uniqueness]
    \label{thm:ExistUniq}
    Let $(M,g)$ be a Riemannian Manifold of bounded geometry, and $\FiltProbSpBrwn$ a filtered probability space with a $\Prob$-complete filtration, equipped with an adapted $d$-dimensional Brownian motion. Let $A$ be a deterministic $(1,1)$-tensor valued function $A : [0,T] \mapto C^2_b(T^1_1M)$, and $B$ be a tensor valued adapted process, $B : [0,T] \times \Omega \mapto C^1_b(T^1M)$, suppose that $\exists C > 0$, such that,
    \begin{align*}
        \|\nabla B\|_{g}, \|\nabla A\|_{g}, \left\|\nabla^2 A\right\|_{g} , \|B\|_{g},\|A\|_{g} < C   &&\forall t \in [0,T], \Prob\text{-a.s}.
    \end{align*}
    
    Then, for any $\mathcal{F}_0$-measurable initial condition $U_0 \in OM$, there exists a unique solution to the lifted equation \eqref{Eq:TOM},
    \begin{equation*}
        dU_t = A^\hrz_i(t,U_t)\circ dW^i_t + B^\hrz(t,U_t)dt \text{ on } T_{U_t}OM,
    \end{equation*}
    in the sense of Definition \ref{def:hStrongSol}, where $B^\hrz(t,\omega,u) = (B(t, \omega, \pi(u))^\hrz$, and $A^h_i(t,u) = (A(t,\pi(u))ue_i)^\hrz$, with $e_i$ the $i^{\text{th}}$ coordinate unit vector of $\dR^d$.
\end{theorem}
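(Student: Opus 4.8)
The plan is to reduce the equation on $OM$ to a countable family of classical It\^o systems, one per chart of a uniformly regular atlas, and to glue the resulting local solutions along a sequence of exit times; the heart of the argument is a non-accumulation estimate for these exit times that is uniform over the whole atlas, which is exactly what bounded geometry provides.

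I would begin by fixing, via Lemma \ref{lem:unifRegAtlas}, a uniformly regular atlas $\AtlasUnifReg$ and checking that it is a \emph{regular localization} in the sense of the earlier definition. The uniform bounds on $\psi_\kappa^* g$ and all its derivatives yield bounds on $g_{ij}$, $g^{ij}$ and on the Christoffel symbols $\Gamma^k_{ij}$ together with their derivatives that are independent of $\kappa$, while the comparability $\|\zeta\|^2/K \le \psi_\kappa^* g(\zeta,\zeta) \le K\|\zeta\|^2$ transfers the global tensor bounds $\|A\|_\infty,\|\nabla A\|_\infty,\|\nabla^2 A\|_\infty,\|B\|_\infty,\|\nabla B\|_\infty < C$ into uniform bounds on the chart coefficients $a^i_l$, $b^i$ and their relevant derivatives. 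The essential point to stress is that all of these constants are uniform across the (countably many) charts, which is the strength that the geometric hypothesis buys.

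Next, working in a fixed chart, I would pass from the Stratonovich system \eqref{Eq:InChart} to its It\^o form, as computed in the proposition above, and note that, since $a \in C^2_b$ and $\Gamma$ is smooth with bounded derivatives, the resulting drift and diffusion coefficients are Lipschitz on $\mathbb{B}^d \times \{\|\zeta\|\le K\}$; after extending them to globally bounded Lipschitz coefficients on $\dR^d\times\dR^{d\times d}$, classical theory gives a unique strong solution, and the horizontality of the lift preserves the constraint $\zeta^i_k g_{ij}\zeta^j_m = \delta_{km}$, so the local solution indeed takes values in $OM$. Starting from $U_0$, I would build the global process inductively: given $U_{\tau_n}$, select measurably, using a fixed enumeration of the countable atlas, a chart $\kappa$ with $\pi(U_{\tau_n})\in\psi_\kappa(r\mathbb{B})$ (which exists by uniform shrinkability), solve the local system until the exit time $\tau_{n+1}$ of $\mathbb{B}^d$, and concatenate. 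Using the Stratonovich chain rule together with the tensorial transformation laws of $a$, $b$ and $\Gamma$ under the changes of coordinates $\phi_\eta\circ\psi_\kappa$, I would then verify that the constructed process satisfies \eqref{Eq:InChart} in \emph{every} overlapping chart, so that it is a genuine solution in the sense of Definition \ref{def:hStrongSol}.

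The main obstacle is to show that the exit times do not accumulate before $T$, i.e.\ that $\sup_n \tau_n \ge T$ almost surely; this is the only step where the geometry is genuinely used. Here I would establish a lower bound on the exit times uniform over all charts: starting inside $r\mathbb{B}$, the process must move a chart-distance at least $1-r$ before leaving $\mathbb{B}^d$, while the uniformly bounded $\xi$-coefficients give, by the Burkholder--Davis--Gundy inequality, an estimate $\dE[\sup_{s\le t}\|\xi_s-\xi_0\|^2] \le C_1 t$ with $C_1$ independent of the chart. A Markov inequality then furnishes a deterministic $t_0>0$, uniform over charts, with $\Prob[\tau_{n+1}-\tau_n \le t_0 \mid \mathcal{F}_{\tau_n}] \le \tfrac12$, and the conditional Borel--Cantelli lemma forces infinitely many increments to exceed $t_0$, so that $\sum_n(\tau_{n+1}-\tau_n)=\infty$ and no accumulation can occur. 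Finally, for uniqueness I would again invoke the chart-independence of the Stratonovich formulation: any two solutions, once expressed in the charts of $\mathfrak{A}$, satisfy the same Euclidean system \eqref{Eq:InChart}, so pathwise uniqueness in each chart, propagated along the common sequence of exit times, yields that the two solutions are indistinguishable on $[0,T]$.
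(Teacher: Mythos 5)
Your proposal is correct in substance, and its skeleton matches the paper's --- a uniformly regular atlas from Lemma \ref{lem:unifRegAtlas} giving coefficient bounds uniform over the countable family of charts, localized Lipschitz It\^o systems in each chart, preservation of the identity $\zeta^i_k g_{ij}\zeta^j_m=\delta_{km}$ so that local solutions stay in $OM$, verification in overlapping charts via the Stratonovich chain rule and the transformation law of the Christoffel symbols, and uniqueness by local pathwise uniqueness propagated by continuity --- but your globalization step is genuinely different. You concatenate local solutions at the random exit times $\tau_n$ of the shrunk balls and exclude accumulation before $T$ by the uniform-in-chart bound $\Prob[\tau_{n+1}-\tau_n\le t_0\mid\mathcal{F}_{\tau_n}]\le \tfrac12$ combined with the conditional Borel--Cantelli lemma; the paper (It\^o's random switching) instead restarts only at the deterministic times $t^m_k=kT/m$, proves the sharper exit estimate \eqref{eq:exitestimate}, $\Prob\big(\sup_{[s,t]}\|\xi_u-\xi_s\|>\rho\mid\mathcal{F}_s\big)\le C|t-s|^2$ (quartic moments, Markov and Burkholder--Davis--Gundy), deduces that the ``good'' event $\overset{(m)}{\Omega}$ on which the mesh-$T/m$ grid construction succeeds has probability at least $(1-C/m^2)^m\to 1$, and finally patches the constructions over $m$. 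Your route needs only second-moment estimates --- note that the paper needs the $|t-s|^2$ rate precisely because $(1-C/m)^m\not\to 1$ --- and is the more classical stopping-time construction; what it costs is (i) a version of Lemma \ref{lem:ExistChart} allowing initial data at stopping times (the paper's restarts are all at deterministic times, exactly the setting of that lemma), and (ii) care at the right endpoint: as stated, your bound $\Prob[\tau_{n+1}-\tau_n\le t_0\mid\mathcal{F}_{\tau_n}]\le\tfrac12$ fails on $\{T-t_0<\tau_n<T\}$, where $\tau_{n+1}-\tau_n\le T-\tau_n<t_0$ holds deterministically, so accumulation at a point of $(T-t_0,T]$ is not excluded by the argument as written; extending the coefficients constantly in time past $T$ (or replacing the events by $\{\tau_{n+1}-\tau_n>t_0\}\cup\{\tau_{n+1}=T\}$) repairs this, and the Borel--Cantelli argument then closes. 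Neither point is a genuine gap, only bookkeeping, so your proof is a valid alternative; the trade-off is random restart times plus weaker moment estimates on your side, versus deterministic restart times plus the finer exit estimate on the paper's side.
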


\begin{remark}
    There is no integrability assumption imposed on the initial condition for this type of solution. However, $A$ is required to be deterministic and smooth, a necessity arising from the Stratonovich formalism, as discussed in the previous section, to ensure a chart-independent definition of the stochastic differential equation.
\end{remark}
We first establish the following lemma, which provides an existence and uniqueness result for the process in local charts.
From this point onward, we assume the hypotheses of Theorem~\eqref{thm:ExistUniq}.

\begin{lemma}
    \label{lem:ExistChart}
    Let $K \geq 1$. Given a normal coordinate chart $\Lambda = (B_M(x,r),\phi)$ around $x$ of radius $0<r\leq i_M$, and denote respectively $a,b,g, \Gamma$  the components in the chart $\Lambda$ of the tensors $A$ and $B$, the metric tensor and the Christoffel symbol. 
    
    Then for all $ s \in [0,T]$, for any $\mathcal{F}_s$-measurable $(\xi(s),\zeta(s)) \in B_d(0,r)\times B_d(0,K)^d$. There exists an unique adapted continuous process $(\xi^i(t),\zeta^j_k(t))_{t\in [s,T]}$, with initial condition $(\xi(s),\zeta(s))$, solution of 
    \begin{align*}
        \tag{$\lambda E_{\Lambda}$}
        \label{eq:localChartSDE}
        \begin{cases}
            d\xi^i(t)  = (\lambda(\xi) a^i_l(t,\xi)\zeta^l_m)\circ dW^m_t + \lambda(\xi)b^i(t,\xi)dt,\\
            d\zeta^k_m(t) = (-\lambda(\xi)\Gamma^k_{ij}\zeta^i_m a^j_l\zeta^l_n)\circ dW^n_t + (-\lambda(\xi)\Gamma^k_{ij}b^i \zeta^j_m)dt,
        \end{cases}
    \end{align*}
    where $\lambda$ is a bump function confining the process in $B_d(0,r)\times B_d(0,K^2)^d$.
    Moreover, the following property holds,
    \begin{equation*}
        \label{eq:zetanorm}
        \tag{$Z_\Lambda$}
        \zeta^i_k(t)g_{ij}(\xi_t)\zeta^j_m(t) \equiv \zeta^i_k(s)g_{ij}(\xi_s)\zeta^j_m(s) \ \ \ \forall t \in [s,T].
    \end{equation*}
    Furthermore, for any $p\geq 1$, there exists a constant $C_{T,p}>0$ depending on the geometry and the bounds on the tensors $A,B$, such that,
    \begin{equation}
        \label{est:LpestimChart}
        \dE \left[ \sup_{u\in [s,t]} \|\xi_u -\xi_s\|^p \Big| \mathcal{F}_s \right] \leq C_{T,p} |t-s|^\frac{p}{2}.
    \end{equation}
    Finally, for any $\epsilon>0$, there exists $C_{T,\epsilon}>0$, such that,
    \begin{equation*}
        \label{eq:exitestimate}
        \tag{$\Prob_{\tau} $}
        \Prob\left( \sup_{[s,t]} \| \xi_u - \xi_s \| > \epsilon | \mathcal{F}_s \right) \leq C_{T,\epsilon}|t-s|^2.
    \end{equation*}
\end{lemma}
\begin{proof}
    Let $\lambda : \dR^d \mapto [0,1]$ be the following $C^2_b(\dR^d)\cap W^{3,\infty}(\dR^d)$ bump function,
    \begin{equation*}
        \lambda(x) =    \begin{cases}
                            \hspace{2em}1 \hspace{5em}\text{ if }  \|x\| < \frac{2r}{3},\\
                            -6\left(\frac{3\|x\|-2r}{r}\right)^5+15\left(\frac{3\|x\|-2r}{r}\right)^4-10\left(\frac{3\|x\|-2r}{r}\right)^3+1 & \text{ if } \|x\| \in \left[\frac{2r}{3},r\right],\\
                            \hspace{2em}0 \hspace{5em}\text{ otherwise.}
                        \end{cases}
    \end{equation*}
    Introduce, the Ito Stochastic differential equation associated to the Stratonovich equation \eqref{eq:localChartSDE}.
    For the sake of conciseness, we note $a^i_j$ instead of $\lambda(\xi_t)a^i_j(t,\xi)$ and similarly $b^i$ for $\lambda(\xi)b^i(t,\xi)$. We omit $t$ in the differential equation at time t, and $\xi_t$ in the Christoffel symbol, so when we note $\partial_na^j_l$, we mean $\partial_n(\lambda(\xi_t)a^i_j(t,\xi))$. The Itô's counterpart of equation \eqref{eq:localChartSDE} is expressed as,
    \begin{align*}
        d\xi^i  = & (a^i_l \zeta^l_m)dW^m_t +\frac{1}{2}(\partial_j a^i_l a^j_k \zeta^k_m \zeta^l_m - a^i_l \zeta^n_m\zeta^j_m a^k_n \Gamma^l_{kj}) dt+ b^idt,\\
        d\zeta^k_m = & (-\Gamma^k_{ij}\zeta^i_m \zeta^l_q a^j_l)dW^q_t -(\Gamma^k_{ij}b^i \zeta^j_m)dt \\
                    & - \frac{1}{2}(\Gamma^k_{ij}\zeta^i_m \partial_n a^j_l \zeta^l_q a^n_p \zeta^p_q + \partial_n \Gamma^k_{ij}\zeta^i_ma^j_l\zeta^j_qa^n_p\zeta^p_q - \Gamma^k_{ij} \Gamma^i_{np}\zeta^n_m a^p_{l'}\zeta^{l'}_q a^j_l \zeta^l_q 
                     - \Gamma^k_{ij}\zeta^i_ma^j_l\Gamma^l_{np}\zeta^n_q\zeta^{l'}_qa^p_{l'})dt.
    \end{align*}

    Since the coefficients and the Christoffel symbol are regular in the chart, if we introduce the following stopping time,
    \begin{equation*}
        \tau = \inf \left\{t > s, \max^d_{m=1} \|\zeta_m(t)\| > 2K^2 \right\},
    \end{equation*}
    
    using classical existence and uniqueness result for stochastic differential equations with random coefficients, e.g \cite[Theorem 1.C p.90]{elworthy1982stochastic}, we obtain existence and uniqueness of the localized stochastic equation, since all the coefficients involved are $\Prob$-a.s Lipschitz and bounded up to the killing time. Now by proving the first property \eqref{eq:zetanorm} of the lemma, for the localized system, this will imply that the localized system stays in a ball inside the localization, since from the uniform equivalence of the norm in the chart, we obtain that,
    \begin{equation*}
        |\zeta_k(t)|^2 \leq K \zeta^i_k g^{ij} \zeta^j_k(t) = K \zeta^i_k g^{ij} \zeta^j_k(s) \leq K^2 |\zeta_k(s)|^2 \leq K^4.
    \end{equation*}
    Thus the localized solution is a solution to the non-localized equation.
    
    We now prove \eqref{eq:zetanorm}. Applying Itô-Stratonovich equivalence \cite[Prop.2.21,p.295]{karatzas2012brownian}, we obtain that $(\xi,\zeta)$ is a solution to the localized version of the Stratonovich equation \eqref{eq:localChartSDE}. Again we omit for conciseness $t,\xi(t)$ when evaluating a matrix value function or a coordinate in time, and the bump function $\lambda$.
    \begin{align*}
        d(\zeta^i_m(t) g_{ij}(\xi)\zeta^j_n(t)) = & g_{ij}\zeta^j_n \circ d\zeta^i_m + g_{ij}\zeta^i_m \circ d\zeta^j_n + \zeta^j_n \zeta^i_m \circ dg_{ij}\\
                                                = & (\partial_qg_{ij}\zeta^i_m\zeta^j_na^q_l\zeta^l_k -g_{ij}\zeta^j_n\Gamma^i_{pq}\zeta^p_ma^q_l\zeta^l_k - g_{ij}\zeta^i_m\Gamma^j_{qp}\zeta^p_na^q_l\zeta^l_k)\circ dW^k_t \\
                                                & + ( \partial_qg_{ij}\zeta^i_m\zeta^j_nb^q -\Gamma^i_{pq}b^q\zeta^i_m\zeta^p_n g_{ij} -\Gamma^i_{pq}b^q\zeta^p_m\zeta^j_n g_{ij}) dt\\
                                                = & (\zeta^i_m\zeta^j_na^q_l\zeta^l_k(\partial_q g_{ij} - g_{il}\Gamma^l_{jq} - g_{lj}\Gamma^l_{iq}))\circ dW^k_t \\
                                                & + (\zeta^i_m\zeta^j_nb^q(\partial_q g_{ij} - g_{il}\Gamma^l_{jq} - g_{lj}\Gamma^l_{iq})) dt
    \end{align*}
    Now by the following identity, $\partial_k g_{ij} - g_{il}\Gamma^l_{jk} - g_{lj}\Gamma^l_{ik} \eqcoord \nabla g = 0,$ e.g \cite{lee2006riemannian}, we obtain that, 
    \begin{equation*}
        d(\zeta^i_m(t) g_{ij}(\xi)\zeta^j_n(t)) \equiv 0.
    \end{equation*}
    Thus,
    \begin{equation*}
        \zeta^i_m(t) g_{ij}(\xi(t))\zeta^j_n(t) \equiv \zeta^i_m(s) g_{ij}(\xi(s))\zeta^j_n(s) \ \ \ \ \forall t \in [s,T].
    \end{equation*}

    We now prove the integrability estimate~\eqref{est:LpestimChart}. To this purpose, we denote by $M(s,\cdot)$ the matingale,
    \begin{equation*}
        M^i(s,u) \defeq \int^u_sa^i_l \zeta^l_mdW^m_v.
    \end{equation*}
    
    Then applying Burkholder-Davis-Gundy inequality, for $p\geq 1$,
    \begin{align*}
        \dE \left[\sup_{u\in [s,t]}\|\xi_u - \xi_s\|^p \Big|\mathcal{F}_s\right] &\leq C_p \dE\left[\left(\int_s^t\|\partial_j a^i_l a^j_k \zeta^k_m \zeta^l_m - a^i_l \zeta^n_m\zeta^j_m a^k_n \Gamma^l_{kj}\| + \|b\| du\right)^p\Big|\mathcal{F}_s \right] \\
        &\hspace{3em}+ \dE \left[\sup_{u\in [s,t]}\|M(s,u)\|^p\Big|\mathcal{F}_s\right],\\
        & \leq C_M K^2 (\|A\|^p_g\|\nabla A\|^p_g + \| A\|^{2p}_g + \|B\|^p_g)|t-s|^p\\
        &\hspace{3em}+ C_{BDG}\dE \left[\langle M(s,\cdot)\rangle^{p/2}_{t}\Big|\mathcal{F}_s\right],\\
        & \leq C_M K^2 C_{A,B} |t-s|^p+ C_{BDG}C_M K^2\|A\|^p|t-s|^{p/2}\\
        &\leq  C_{T,p} |t-s|^{p/2},
    \end{align*}

    where $C_{BDG}$ is the Burkholder-Davis-Gundy constant, and $C_{T,p}>0$ is a constant depending on a maximal time $T$, the tensors $A,B$ and the geometry.
    
    The estimate \eqref{eq:exitestimate} follows from Markov conditional inequality and the previous estimate with $p = 4$.
\end{proof}

We now prove the main theorem. We employ Itô's random switching construction~\cite{ito1950}, which we adapt to the moving frame solution.

\begin{proof}[Proof of Theorem \ref{thm:ExistUniq}]
    We start by constructing a process using a countable atlas of uniform normal coordinate systems. Since $(M,g)$ is of bounded geometry, from Lemma~\cite[Lemma 2.26 p.48]{aubin2012nonlinear} and the second-countability, there exists $0<r<i(M)$ sufficiently small such that, there exist an at most countable, uniformly locally finite, cover of $M$,
    \begin{equation*}
        \left\{O^\beta_0 = B_M\left(x^\beta, r/3\right), \beta \in \mathfrak{J} \right\}.
    \end{equation*}
    We define $O^\beta = B_M\left(x^\beta, r\right)$ and we fix $\phi^\beta$ an associated normal coordinate system around $x^\beta$, such that $\{\Lambda^\beta = (O^\beta,\phi^\beta), \beta \in \mathfrak{J}\}$ forms an atlas of $M$.
    
    Introduce the following partition of unity,
    \begin{equation*}
        \Tilde{O}^i = O^i_0 \big/ \left(\bigcup_{k = 0}^{i-1} \Tilde{O}^k\right).    
    \end{equation*}
    Fix $m\in \mathbb{N}^*$, let $t^m_k = \frac{kT}{m}$, for $k = 0,\cdots, m$, and for any chart $(O^\beta,\phi^\beta)$ with $\beta \in \mathfrak{J}$, define the $\mathcal{F}_0$-measurable random variables,
    \begin{equation*}
        {(\xi_0, \zeta_0)}^{(\beta, U_0)} = \begin{cases}
                                                (\phi^\beta(\pi(U_0)),d\phi^\beta(\pi(U_0))U_0\partial/\partial x^j) \text{ if } U_0 \in \Tilde{O}^\beta \\
                                                (0_{\dR^d},0_{\dR^{d^2}}) \text{ otherwise.}
                                             \end{cases}
    \end{equation*}
    From Lemma~\ref{lem:ExistChart}, for any normal chart $\Lambda^\beta$ there exists a unique solution to \eqref{eq:localChartSDE}, starting from $t^m_0$, with initial condition ${(\xi_0, \zeta_0)}^{(\beta, U_0)}$, we call this process $(\xi_\cdot, \zeta_\cdot)^{(\beta, t^m_0,U_0)}_{[0,t^m_1]}$. From property \eqref{eq:zetanorm}, on the event $\{\pi(U_0) \in \Tilde{O}^\beta \}$, 
    \begin{equation*}
        \zeta^i_m(t) g_{ij}(\xi(t)) \zeta^j_k(t) \equiv \zeta^i_l(t_0) g_{ij}(\xi(t_0)) \zeta^j_k (t_0) = \delta_{lk}.
    \end{equation*}
    The last equality holds from the construction of $\zeta_0$. As a result, $\zeta$ remains an orthonormal basis of $T_{\psi^\beta(\xi)}M$, and that $(\xi_\cdot, \zeta_\cdot)^{(\beta, t^m_0,U_0)}_{[0,t^m_1]}$ stays identifiable as an element of $OM$.
    
    Now set,
    \begin{equation*}
        U^{(m)}_\cdot = \left(\psi^\beta\left({\xi_\cdot}^{(\beta,t^m_0,U_0)}\right), d_\xi\psi^\beta{\zeta_\cdot}^{(\beta,t^m_0,U_0)}\right) \in OM  \text{ on } [t^m_0,t^m_1]\text{ if } \pi(U_0) \in \Tilde{O}^\beta.
    \end{equation*}
    Since $\mathfrak{J}$ is countable, $\left(U^{(m)}_\cdot\right)_{[t^m_0,t^m_1]}$ is an adapted process defined up to a $\Prob$-nullset.
    By continuing this procedure, define at step $k = 1, \cdots, m-1$, for any chart $\Lambda^\beta$ the $\mathcal{F}_{t^m_k}$-measurable random variable,
    \begin{equation*}
        {(\xi_{t^m_k}, \zeta_{t^m_k})}^{\left(\beta,U^{(m)}_{t^m_k}\right)} =    \begin{cases}
                                                                \left(\phi^\beta\left(\pi\left(U^{(m)}_{t^m_k} \right)\right),d\phi^\beta\left(\pi\left(U^{(m)}_{t^m_k} \right)\right)U^{(m)}_{t^m_k} \partial/\partial x^j\right) \text{ if } U^{(m)}_{t^m_k} \in \Tilde{O}^\beta \\
                                                                (0,0) \text{ otherwise.}
                                                            \end{cases}
    \end{equation*}
    From Lemma~\ref{lem:ExistChart}, for any chart $\Lambda^\beta$, note by ${(\xi_\cdot, \zeta_\cdot)}^{\left(\beta,U^{(m)}_{t^m_k}\right)}_{[t_k,t^m_{k+1}]}$ the unique solution of \eqref{eq:localChartSDE} starting at $t^m_k$ with initial condition ${(\xi_{t^m_k}, \zeta_{t^m_k})}^{\left(\beta,U^{(m)}_{t^m_k}\right)}$, and extend the process $U^{(m)}$ as,
    \begin{equation*}
        U^{(m)}_\cdot = \left(\psi^\beta\left({\xi_\cdot}^{\left(\beta,t^m_k,U^{(m)}_{t^m_k}\right)}\right), d_\xi\psi^\beta{\zeta_\cdot}^{\left(\alpha,t^m_k,U^{(m)}_{t^m_k}\right)}\right) \in OM  \text{ on } [t^m_k,t^m_{k+1}]\text{ if } \pi(U_{t^m_k}) \in \Tilde{O}^\beta.
    \end{equation*}
    For $m \in \mathbb{N}^*, k = 0,\cdots,m-1$, define the following events,
    \begin{align*}
        \Omega^{(k,m)}_{\alpha \beta} & = \left\{\omega\in\Omega, \pi\left(U^{(m)}_{(t^m_k,\omega)}\right)\in \Tilde{O}^\alpha , \pi\left(U^{(m)}_{(t^m_{k+1},\omega)}\right)\in \Tilde{O}^\beta, \xi^{\left(\alpha,t^m_k,U^{(m)}_{t^m_k}\right)}_{([t^m_k,t^m_{k+1}])} \in B_{\dR^d}\left(0,\frac{2r}{3}\right) \ \right\}\\
        \Omega^{(m)} & = \bigsqcup_{n_0,\cdots, n_m \in \mathfrak{K}^m} \bigcap_{k = 0,\cdots, m-1} \Omega^{(k,m)}_{n_k,n_{k+1}}
    \end{align*}
    Introduce the adapted process $(U_\cdot)_{[0,T]}$, defined up to a $\Prob$-nullset on the event $\bigcup_{m\in \mathbb{N}}\Omega^{(m)}$ as,
    \begin{equation*}
        U_\cdot = \begin{cases}
                U_\cdot^{(1)} \text{ on } \Omega^{(1)}, \\
                U_\cdot^{(m)} \text{ on } \Omega^{(m)}/ \left(\bigcup^{m-1}_{n=1} \Omega^{(n)}\right). \\
            \end{cases}
    \end{equation*}
    We now prove that $U$ is defined $\Prob$-a.s, by proving that, $\lim_m \Prob\left(\Omega^{(m)}\right) = 1$. To this end, define $\Tilde{\Omega}^{(k,m)}_{\beta} \defeq \Big\{ X_{t^m_k} \in \Tilde{O}^{\beta}, \sup_{[t^m_k,t^m_{k+1}]}\|\xi^{(\beta,k,m)}_{(u)}\| < \frac{2r}{3}\Big\}$, where $X_t \defeq \pi(U_t)$. Note that, 
    \begin{equation*}
        \left\{X_{t^m_k} \in \Tilde{O}^{\beta}, \sup_{[t^m_k,t^m_{k+1}]}\left\|\xi^{(\beta,k,m)}_{(u)} - \xi^{(\beta,k,m)}_{(t^m_k)}\right\| < \frac{r}{3}\right\} \subset \Tilde{\Omega}^{(k,m)}_{\beta},    
    \end{equation*}
     so that, for any event $A \in \mathcal{F}_{t^m_k}$,

    \begin{align*}
        \Prob\Big(A \cap \Tilde{\Omega}^{(k,m)}_{\beta}\Big) \geq & \Prob\left(A \cap \left\{ X_{t^m_k} \in \Tilde{O}^{\beta} , \sup_{[t^m_k,t^m_{k+1}]}\left\|\xi^{(\beta,k,m)}_{(u)} - \xi^{(\beta,k,m)}_{(t^m_k)}\right\| < \frac{r}{3} \right\}\right)\\
                                                                                     = & \int_{A \cap \left\{ X_{t^m_k} \in \Tilde{O}^{\beta}\right\}}\Prob\left(\sup_{[t^m_k,t^m_{k+1}]}\left\|\xi^{(\beta,k,m)}_{(u)} - \xi^{(\beta,k,m)}_{(t^m_k)}\right\| < \frac{r}{3} \Big| \mathcal{F}_{t^m_k}\right)\Prob(d\omega)\\
                                                                                     \geq & \int_{A \cap \left\{ X_{t^m_k} \in \Tilde{O}^{\beta}\right\}}\left(1-\frac{C}{m^2}\right)\Prob(d\omega) = \Prob\left(A \cap \{ X_{t^m_k} \in \Tilde{O}^{\beta}\}\right)\left(1-\frac{C}{m^2}\right),
    \end{align*}
    where $C$ is a constant, independent of $\beta\in\mathfrak{J}$, depending on the tensor norms of $A$ and $B$, and the geometry.
    Thus we conclude,
    \begin{align}
        \Prob\left(\Omega^{(m)}\right) & = \sum_{n_0,\cdots,n_{m-1}} \Prob\left(\Omega^{(1,m)}_{(n_1,n_2)}\cap \cdots \cap \Omega^{(m-2,m)}_{(n_{m-2},n_{m-1})} \cap \Tilde{\Omega}^{(k,m)}_{m-1}\right) \nonumber \\
        &\geq \left(1-\frac{C}{m^2}\right)\sum_{n_0,\cdots,n_{m-1}} \Prob\left(\Omega^{(1,m)}_{(n_1,n_2)}\cap \cdots \cap \Omega^{(m-2,m)}_{(n_{m-2},n_{m-1})}\right) \nonumber \\
        & \geq \left(1-\frac{C}{m^2}\right)^m \xrightarrow[]{m \mapto +\infty } 1.
        \label{eq:convergenceOmegm}
    \end{align}
    
    We now verify that the process is a solution in the sense of Definition~\ref{def:hStrongSol}. Let $\{(O^\alpha,\phi^\alpha), \alpha \in \mathfrak{K}\}$ be a regular localization of the dynamical system. Let $t \in [0,T], \alpha \in \mathfrak{K}$, and introduce as in Definition \ref{def:hStrongSol},
    \begin{equation*}
        \Omega^{\alpha}_t \defeq \{ \omega \in \Omega, \pi(U_t(\omega)) \in O^\alpha\} \in \mathcal{F}_t,
    \end{equation*}
    and the exit time $\tau^{\alpha}_t : \Omega^{\alpha}_t \mapto [t,T]$ of $O^\alpha$.
    From the convergence of $\Omega^{(m)}$ in \eqref{eq:convergenceOmegm},
    
    \begin{equation*}
        \Omega^{\alpha}_t = \left(\bigcup_{m\in \mathbb{N}} \Omega^{\alpha}_t \cap \Omega^{(m)} \right)\cup \Omega_0,
    \end{equation*}
    
    where $\Omega_0$ is a $\Prob$-nullset. 
    Denote by $(\Bar{\xi}^i, \Bar{\zeta}^j_k)_{[t,\tau^\alpha_t)}$ the process defined in $\Omega_t^\alpha$ by,
    \begin{equation*}
        (\Bar{\xi}^i, \Bar{\zeta}^j_k) = \left(\phi^\alpha\left(\pi\left(U\right)\right),d\phi^\alpha\left(\pi\left(U\right)\right)U \partial/\partial x^j\right).
    \end{equation*}
    
    Take a multi-index $\overset{\rightharpoonup}{n} \in \mathfrak{J}^m$ and define
    \begin{equation*}
        \Omega^{(m)}_{\overset{\rightharpoonup}{n}} = \bigcap_{k = 0,\cdots, m-1} \Omega^{(k,m)}_{n_k,n_{k+1}}.
    \end{equation*}

    Then, either $\Omega^{\alpha}_t\bigcap  \Omega^{(m)}_{\overset{\rightharpoonup}{n}} = \varnothing$, in which case the property is verified, or $\Omega^{\alpha}_t\bigcap  \Omega^{(m)}_{\overset{\rightharpoonup}{n}} \neq \varnothing$. 
    In the latter case, by the construction of $U$, from the change of coordinates rule, $\forall s \in [t,\tau^\alpha_t)$, if  $\frac{lT}{m} \leq s < \frac{(l+1)T}{m}$ then,
    \begin{equation*}
        \left(\Bar{\xi}(s), \Bar{\zeta}_k(s)\right) = \left((\varphi^\alpha \circ \psi^{n_l})\left(\xi^{(n_l,t_l, U^{(m)}_{t_k})}_{(s)}\right), \partial_j(\varphi^\alpha \circ \psi^{n_l})\left(\zeta^{(n_l,t_l, U^{(m)}_{t_k})}_{k(s)}\right)^j\right) 
    \end{equation*}
    For the sake of clarity, we introduce the following notations for the up coming computations. First $\Bar{x}(y) \defeq (\varphi^\alpha \circ \psi^\beta) (y), \frac{\partial \Bar{x}^i}{\partial x^k}(y) = \partial_k(\varphi^\alpha \circ \psi^\beta (y))^i$, respectively, $x(\Bar{y}) \defeq (\varphi^\beta \circ \psi^\alpha) (\Bar{y}), \frac{\partial x^i}{\partial \Bar{x}^k}(\Bar{y}) = \partial_k(\varphi^\beta \circ \psi^\alpha (\Bar{y}))^i$, then by the change of coordinates rule, $(\Bar{\xi}^i, \Bar{\zeta}^j_k) = (\Bar{x}^i(\xi), \frac{\partial\Bar{x}^j}{\partial x^l}(\xi)\zeta^l_k)$.
    From the construction of the process $U$, on the considered event, $\xi$ stays where the bump function is equal to $1$. From the Stratonovich chain rule, we obtain the following equation on $(\Bar{\xi}^i, \Bar{\zeta}^j_k)$, again, we omit $t,\xi$ for the matrix-valued functions.
    \begin{align*}
            d\Bar{\xi}^i & = \left(\frac{\partial\Bar{x}^i}{\partial x^l} a^l_m \zeta^m_k\right)\circ dW^k_t + \frac{\partial\Bar{x}^i}{\partial x^l} b^l dt,\\
            d\Bar{\zeta}^j_k & =  \left(\frac{\partial^2 \Bar{x}^j}{\partial x^l \partial x^i} a^i_m\zeta^m_n\zeta^l_k - \frac{\partial \Bar{x}^j}{\partial x^p}\Gamma^p_{il}a^i_m \zeta^m_n \zeta^l_k\right)\circ dW^n_t + \left(\frac{\partial^2 \Bar{x}^j}{\partial x^l \partial x^i} b^i \zeta^l_k - \frac{\partial \Bar{x}^j}{\partial x^p}\Gamma^p_{il}b^i\zeta^l_k\right)dt.
    \end{align*}
    We recall that,
    \begin{equation*}
        \frac{\partial \Bar{x}^j}{\partial x^l} \frac{\partial x^l}{\partial \Bar{x}^p}= \partial_p(\varphi^\alpha \circ  \psi^\beta(\varphi^\beta \circ \psi^\alpha))^j = \delta^j_p,
    \end{equation*}
    and by the change of coordinates of the tensors,
    \begin{equation*}
        \Bar{a}^i_j = \frac{\partial\Bar{x}^i}{\partial x^l}\frac{\partial x^k}{\partial \Bar{x}^j}a^l_k,
    \end{equation*} where $\Bar{a}$(resp. $a$) are the coefficients of $A$ the $(1,1)$-tensor in the chart $\Lambda^\alpha$(resp. $\Lambda^\beta$).
    We then obtain,
    \begin{align*}
            d\Bar{\xi}^i & = (\Bar{a}^i_m \bar{\zeta}^m_k)\circ dW^k_t + \bar{b}^i dt,\\
            d\Bar{\zeta}^j_k & =  \left(\frac{\partial^2 \Bar{x}^j}{\partial x^l \partial x^i} \frac{\partial x^i}{\partial \Bar{x}^q}\Bar{a}^q_m\Bar{\zeta}^m_n\frac{\partial x^l}{\partial \Bar{x}^p}\Bar{\zeta}^p_k - \frac{\partial \Bar{x}^j}{\partial x^m}\Gamma^m_{il}\frac{\partial x^i}{\partial \Bar{x}^q}\Bar{a}^q_{m'} \Bar{\zeta}^{m'}_n \frac{\partial x^l}{\partial \Bar{x}^p}\Bar{\zeta}^p_k\right)\circ dW^n_t
            \\ & \hspace{2em} + \left(\frac{\partial^2 \Bar{x}^j}{\partial x^l \partial x^i}\frac{\partial x^i}{\partial \Bar{x}^q} \Bar{b}^q \frac{\partial x^l}{\partial \Bar{x}^p}\Bar{\zeta}^p_k - \frac{\partial \Bar{x}^j}{\partial x^m}\Gamma^m_{il}\frac{\partial x^i}{\partial \Bar{x}^q}\Bar{b}^q\frac{\partial x^l}{\partial \Bar{x}^p}\Bar{\zeta}^p_k\right)dt.
    \end{align*}
    Rewriting the second equation as,
    \begin{align*}
        d\Bar{\zeta}^j_k = &  \left(\left(\frac{\partial^2 \Bar{x}^j}{\partial x^l \partial x^i} \frac{\partial x^i}{\partial \Bar{x}^q}\frac{\partial x^l}{\partial \Bar{x}^p} - \frac{\partial \Bar{x}^j}{\partial x^m}\Gamma^m_{il}\frac{\partial x^i}{\partial \Bar{x}^q}\frac{\partial x^l}{\partial \Bar{x}^p}\right)\Bar{a}^q_{m'} \Bar{\zeta}^{m'}_n \Bar{\zeta}^p_k\right)\circ dW^n_t \\ 
       & + \left(\left(\frac{\partial^2 \Bar{x}^j}{\partial x^l \partial x^i}\frac{\partial x^i}{\partial \Bar{x}^q}\frac{\partial x^l}{\partial \Bar{x}^p} - \frac{\partial \Bar{x}^j}{\partial x^m}\Gamma^m_{il}\frac{\partial x^i}{\partial \Bar{x}^q}\frac{\partial x^l}{\partial \Bar{x}^p}\right)\Bar{b}^q \Bar{\zeta}^p_k \right)dt.
    \end{align*}
    Combining the identity,
    \begin{equation*}
        \frac{\partial^2 \Bar{x}^j}{\partial x^l \partial x^i}\frac{\partial x^i}{\partial \Bar{x}^q}\frac{\partial x^l}{\partial \Bar{x}^p} + \frac{\partial^2 x^l}{\partial \Bar{x}^p \partial \Bar{x}^q}\frac{\partial \Bar{x}^j}{\partial x^l} =  \partial_{pq}(\varphi^\alpha \circ  \psi^\beta(\varphi^\beta \circ \psi^\alpha))^j= 0,
    \end{equation*} with the change of coordinates of the Christoffel symbol identity,
    \begin{equation*}
        \Bar{\Gamma}^j_{pq} = \frac{\partial \Bar{x}^j}{\partial x^m}\Gamma^m_{il}\frac{\partial x^i}{\partial \Bar{x}^q}\frac{\partial x^l}{\partial \Bar{x}^p} + \frac{\partial^2 x^l}{\partial \Bar{x}^p \partial \Bar{x}^q}\frac{\partial \Bar{x}^j}{\partial x^l},
    \end{equation*}
    we conclude that,
    \begin{equation*}
        -\Bar{\Gamma}^j_{pq} = \left(\frac{\partial^2 \Bar{x}^j}{\partial x^l \partial x^i}\frac{\partial x^i}{\partial \Bar{x}^q}\frac{\partial x^l}{\partial \Bar{x}^p} - \frac{\partial \Bar{x}^j}{\partial x^m}\Gamma^m_{il}\frac{\partial x^i}{\partial \Bar{x}^q}\frac{\partial x^l}{\partial \Bar{x}^p}\right).
    \end{equation*}
    Thus, $(U_\cdot)$ is a solution of \eqref{Eq:TOM} in the sense of Definition \ref{def:hStrongSol}.\\

    We conclude the proof, with the proof of the uniqueness. Let $(U^1_\cdot),(U^2_\cdot)$ be two solutions of \eqref{Eq:TOM}, with initial condition equal $\Prob$-a.s, we can suppose that they are solutions on the same uniformly regular atlas. Choose a countable dense subset $\mathcal{D}$ of $[0,T]$ with $0 \in \mathcal{D}$. For $t_0 \in \mathcal{D}$, define the events $E^\alpha_{t_0} = \{U^1_{t_0} = U^2_{t_0} \in O^\alpha_0\}$ and  $\Omega^\alpha_{[t_0,t_1]} = \{E^\alpha_{t_0}, t_1 < \tau_{t_0}^\alpha\}$, where $\tau_{t_0}^\alpha: E^\alpha_{t_0} \mapto [t,T]$, is the minimum of the exit time of $O^\alpha_0$ for each of the processes. Then by hypothesis, since they are both solutions of \eqref{Eq:TOM}, up to a $\Prob$-nullset, that we note $Z^\alpha_{[t_0,t_1]}$, they are solutions in the chart $\Lambda^\alpha$ of the same Euclidean stochastic differential equation, and from the uniqueness result of Lemma \ref{lem:ExistChart}, both processes are equal on $\Omega^\alpha_{[t_0,t_1]} \big/ Z^\alpha_{[t_0,t_1]}$. \\
    
    Taking $\omega \in \Omega$, both process are continuous a.s,
    let $s \in [0,T]$ and assume that the processes agree on $(\omega, [0,s))$, then there exists $\alpha \in \mathfrak{K}$, with $\omega \in \Omega^\alpha_{[t_0,t_1]}$ for some $t_0,t_1 \in \mathcal{D}$ satisfying $0\leq t_0\leq s<t_1\leq T$.
    Then if $\omega \notin Z^\alpha_{[t_0,t_1]}$ we have that the processes agree on $(\omega, [0,t_1])$. Noting that $t_1$ is strictly greater than $s$, using the sample continuity and the fact that $\bigcup_{t_0,t_1\in \mathcal{D}, \alpha \in \mathfrak{K}} Z^\alpha_{[t_0,t_1]}$ is a $\Prob$-nullset, we obtain the desired result.
\end{proof}

In~\cite[Lemma 9B, p.146]{elworthy1982stochastic}, a global Itô's formula is given for general manifolds and general lifted solutions to stochastic differential equations with deterministic coefficients, that hold in our case. We here prove the case of stochastic drift, given the regularity of the manifold and the coefficients.

\begin{proposition}
    \label{prop:ItoFormula}
    Let $(M,g)$ be of bounded geometry, and suppose that $B$ is bounded $\Prob$-almost surely and $A,\nabla A$ are bounded. Let $(U_t)_t$ be the associated moving frame strong solution as Definition~\ref{def:hStrongSol}.
    Then $(X_t \defeq \pi(U_t))_t$ is a continuous adapted process taking values in $M$ such that, $\forall \varphi \in C^{1,2}([0,T]\times M, \dR),$ with bounded derivatives, $\forall t\geq s \in [0,T]$, $(M^s_t)_{t\geq s}$ defined by,
    \begin{align*}
        M^s_t \defeq \varphi(t, X_t) - \varphi(s, X_s)  - \int_s^t\left(\partial_t \varphi + \left(B + \frac{1}{2}\nabla A \cdot A\right)\cdot \nabla \varphi + \frac{1}{2}\Sigma \cdot \nabla^2\varphi\right)(u,X_u) du,
    \end{align*}
    is a martingale, and it holds that,
    \begin{equation*}
        \varphi(t, X_t) - \varphi(s, X_s)  = \int_s^t(\partial_t \varphi + B \cdot \nabla \varphi )(u,X_u) du + \int_s^t(A_i(u,U_u) \cdot \nabla \varphi) \circ dW^i_u.
    \end{equation*}
    or in Ito's form,
    \begin{align*}
          \varphi(t, X_t) - \varphi(s, X_s) & = \int_s^t\left(\partial_t \varphi + \left(B + \frac{1}{2}\nabla A \cdot A\right)\cdot \nabla \varphi + \frac{1}{2}\Sigma \cdot \nabla^2\varphi \right)(u,X_u) du \\ 
          & \hspace{1.5em}  + \int_s^t(A_i(u,U_u) \cdot \nabla \varphi)dW^i_u,
    \end{align*}
    where $\Sigma$ is a $(2,0)$-tensor, defined as $\Sigma = A\cdot A^*$, in local coordinates this can be written as, $\sigma^{ij} \eqcoord a^i_kg^{kl}a^j_l$.
\end{proposition}

\begin{proof}
    Using the decomposition,
    \begin{equation*}
         \Omega = \left(\bigcup_{m \in \mathbb{N} }\bigsqcup_{n_0,\cdots, n_m \in \mathfrak{K}^m} \bigcap_{k = 0,\cdots, m-1} \Omega^{(k,m)}_{n_k,n_{k+1}}\right) \cup \Omega_0,
    \end{equation*}
    from the proof of Theorem~\ref{thm:ExistUniq}, where $\Omega_0$ is a $\Prob$-null set. The proof follows directly from applying Itô's formula~\cite[Theorem 5.1 Chapiter II]{ikeda2014stochastic} in local charts.
\end{proof}
\subsection{Stochastic flow estimate and integrability}
The notion of solution considered here does not require an integrability assumption, in this section, we will prove a Lipschitz in time integrability estimate for the solutions.
In the Euclidean setting, this follows directly from the triangle inequality, Jensen's inequality, and the Burkholder-Davis-Gundy inequality. On a Riemannian manifold, however, one must apply Itô's formula to the distance function. A key difficulty arises outside the injectivity radius, where the distance function fails to be differentiable. For instance, on the one-dimensional torus $\mathbb{T} = \mathbb{R}/\mathbb{Z}$, the squared distance function $d^2(0,\cdot)$ is not differentiable at $1/2$.


\begin{lemma}[Proposition 26.49~\cite{chow2010ricci}]
    \label{lem:regulDist}
    Let $(M,g)$ be a manifold of bounded geometry. There exists a constant $C_M>0$ depending on the bound on the sectorial curvature and the dimension of $M$ such that, for any $x\in M$, there exists a $C^\infty$ function $\Tilde{r}_x:M \to \dR$ such that:
    \begin{equation*}
        |\Tilde{r}_x(y) - d(x,y)|< C_M, \ \ \ \forall y \in M.
    \end{equation*}
    and its derivatives are uniformly bounded in $x$,
    \begin{equation*}
        \|\nabla\Tilde{r}_x\|_g, \|\nabla^2 \Tilde{r}_x\|_g \leq C_M \ \ \ \  \forall x \in M
    \end{equation*}
\end{lemma}





\begin{theorem}
    Let $(M,g)$ be a Riemannian Manifold of bounded geometry.
    Let $\FiltProbSpBrwn$  be a filtered probability space with a $\Prob$-complete filtration, equipped with an adapted $d$-dimensional Brownian motion, and suppose that $(U_\cdot)_{[0,T]}$ is a moving frame solution of 
    \begin{equation*}
        dU_t = A_i^\hrz(U_t) \circ dW^i_t + B^\hrz(t,U_t)dt.
    \end{equation*}
    Then, we have the flow estimate,
    \begin{equation}
        \label{eq:ProbFlowEstimate}
        \dE[d^p(X_s,X_t)] < C_{T,p}|t-s|^{p/2} \ \ \ \forall p \geq 1,
    \end{equation}
    where $C$ depends on $T,p,\|B\|_\infty, \|A\|_\infty, \|\nabla A \|_\infty$ and geometrical bounds.

    Furthermore, if there exists $x_* \in M$ such that, $\dE[d^p(x^*,X_0)] < +\infty$, we write  $X_0 \in L^p(\Omega, d(x_*, \cdot)d\Prob)$.
    Then \eqref{eq:ProbFlowEstimate} ensures that the initial integrability is preserved.
\end{theorem}

\begin{proof}
    Let us introduce the following notations, $r_y(x) = d(x,y)$ and $R_{s,t} = d(X_s,X_t) = r_{X_s}(X_t)$ for any $s<t \in [0,T]$.
    Since $M$ has bounded geometry, according to Lemma \ref{lem:regulDist} there exists a regularization, in the second variable, of the distance function, that we denote by $\Tilde{d}: M\times M \mapto [0,\infty[$. Similarly we note by $\Tilde{r}_x(y) = \Tilde{d}(x,y)$, and $\Tilde{R}_{s,t} = \Tilde{d}(X_s,X_t)$.
    
    Now, start by noting that
    \begin{align*}
        d^p(X_s, X_t) &\leq d^p(X_s, X_t)\indic{\sup_{u \in [s,t]} d(X_s, X_u) <i(M)/2} + (\Tilde{d}(X_s, X_t)+C_M)^p \indic{\sup_{u \in [s,t]} d(X_s, X_u) \geq i(M)/2},\\
        &\leq d^p(X_s, X_t)\indic{\sup_{u \in [s,t]} d(X_s, X_u) < i(M)/2} +C_{p,M} \Tilde{d}^p(X_s, X_t) + C_{p,M}\indic{\sup_{u \in [s,t]} d(X_s, X_u) \geq i(M)/2},
    \end{align*}
    where $C_M>0$ is the constant of Lemma~\ref{lem:regulDist}, and $C_{p,M} >0$ is the constant obtained from the convexity of the $p$-th power and $C_M$.
    
    We will first show that $\dE[\Tilde{R}^p_{s,t}]$ is finite. Since $\Tilde{d}$ is regular, from Ito's formula of Proposition~\ref{prop:ItoFormula}, we obtain that, 
    \begin{align*}
        \Tilde{R}_{s,t} & = \int_s^t\left(\left(B +\frac{1}{2} \nabla A \cdot A\right) \cdot\nabla \Tilde{r}_{X_s} + \frac{1}{2} \Sigma \cdot \nabla^2\Tilde{r}_{X_s}\right)(u,X_u)du 
        \\ &\hspace{5em} + \int_s^t(\nabla \Tilde{r}_{X_s})(u, X_u) \cdot A_i(u,U_u) dW^i_u.
    \end{align*}

    From Lemma \ref{lem:regulDist}, and the bound on $B$, $A$ and $\nabla A$ the first integral term is bounded as,
    \begin{equation*}
        \left|\int_s^t\left(\left(B +\frac{1}{2} \nabla A \cdot A\right) \cdot\nabla \Tilde{r}_{X_s} +\frac{1}{2} \Sigma \cdot \nabla^2\Tilde{r}_{X_s}\right)(u,X_u)du\right|^p \leq C|t-s|^p,
    \end{equation*}
    for some constant $C>0$.
    Denote by $\Tilde{M}_{s,u}$, the martingale defined as,
    \begin{equation*}
        \Tilde{M}_{s,u} = \int_s^u(\nabla \Tilde{r}_{X_s})(X_v) \cdot A_i(u,U_u) dW^i_v.
    \end{equation*}
    
    Then, by Burkholder-Davis-Gundis inequality,
    \begin{equation*}
        \dE\left[\Tilde{d}^p(X_s,X_t)\right] \leq C\left(|t-s|^p + \dE\left[\sup_{u\in [s,t]} \Tilde{M}^p_{s,u}\right]\right) \leq C\left(|t-s|^p + \dE\left[\langle\Tilde{M}_{s}\rangle^{p/2}_t\right]\right), \\ 
    \end{equation*}
    for some constant $C>0$.
    
    Since,
    \begin{equation*}
        |\nabla \Tilde{r}_{X_s}(X_t) \cdot A_i(u,U_u)| \leq \|\nabla \Tilde{r}\|_g\|A\|_g,
    \end{equation*}
    We obtain that,
    \begin{equation}
        \label{eq:estimTildDist}
        \dE[\Tilde{d}^p(X_s,X_t)] \leq C\left(|t-s|^p + |t-s|^{p/2}\right).
    \end{equation}
    for a new constant $C>0$.

    
    
    On the event $\left\{\sup_{u \in [s,t]} d(X_s, X_u) < i(M)/2\right\}$, the solution stays in a normal coordinate chart around $X_s$, and coincide with the solution from Lemma~\ref{lem:ExistChart}, and noting $(\xi_{\cdot})_{[s,t]}$ the process in such a normal chart, one has on this event,
    \begin{equation*}
        \|\xi_u - \xi_s\| = d(X_s, X_u).
    \end{equation*}
    So that from estimate~\eqref{est:LpestimChart}, one obtains,
    \begin{equation*}
        \dE[d^p(X_s,X_t)\indic{\sup_{u \in [s,t]} d(X_s, X_u) < i(M)/2}] \leq C_{T,p}|t-s|^{p/2},
    \end{equation*}
    for some constant $C_{T,p}>0$, depending on $T>0$ and $p$.
    
    Finally, introducing the exit time $\tau = \inf\{u>s, d(X_s, X_u) > i(M) \} \bigwedge T$, one can also apply~\eqref{eq:estimProbexit} since it first needs to exist the ball $i(M)$ before $i(M)/2$, to use the estimate in the normal chart, and obtain,
    \begin{align}
        \label{eq:estimProbexit}
        \Prob\left(\sup_{u \in [s,t]} d(X_s, X_u) \geq i(M)/2\right) \leq C|t-s|^p\wedge1,
    \end{align}
    Summing-up the estimates \eqref{eq:estimTildDist} and \eqref{eq:estimProbexit}, in
    \begin{align*}
        \dE[d^p(X_s,X_t)]  & \leq \dE[d^p(X_s,X_t) \indic{\sup_{u \in [s,t]} d(X_s, X_u) < i(M)/2}] \\
        &\hspace{2em} + C_p\dE \left[\Tilde{d}^p(X_s,X_t)\right] + C_p \Prob\left(\sup_{u \in [s,t]} d(X_s, X_u) \geq i(M)/2\right),
    \end{align*}
    concludes the proof.    
\end{proof}




\begingroup
\renewcommand{\thesubsection}{\Alph{subsection}}
\renewcommand{\thetheorem}{\thesubsection.\arabic{theorem}}
\renewcommand{\thedefinition}{\thesubsection.\arabic{definition}}
\renewcommand{\thedefinitionproposition}{\thesubsection.\arabic{definitionproposition}}
\renewcommand{\theproposition}{\thesubsection.\arabic{proposition}}



\bibliographystyle{unsrt}
\bibliography{biblio}

\end{document}